\def\marker{\>\hbox{${\vcenter{\vbox{
    \hrule height 0.4pt\hbox{\vrule width 0.4pt height 6pt
    \kern6pt\vrule width 0.4pt}\hrule height 0.4pt}}}$}\>}
\newtheorem{theorem}{Theorem} 
\newtheorem{theorem*}{Theorem} 
\theoremstyle{definition}
\theoremstyle{remark}
\title{Improved algorithm to determine 3-colorability of graphs with the minimum degree at least 7}
\author{Nicholas Crawford\footnotemark[1], Sogol Jahanbekam\footnotemark[2], and Katerina Potika\footnotemark[3]}
\date{}
\begin{document}

\maketitle

\begin{abstract}
Let $G$ be an $n$-vertex graph with the maximum degree $\Delta$ and the minimum degree $\delta$. We  give  algorithms with complexity $O(1.3158^{n-0.7~\Delta(G)})$ and $O(1.32^{n-0.73~\Delta(G)})$ that determines if $G$ is 3-colorable, when $\delta(G)\geq 8$ and $\delta(G)\geq 7$, respectively.

\noindent{\bf Keywords: algorithms, complexity, proper coloring, 68W01, 68Q25,  05C15}
\end{abstract}

\renewcommand{\thefootnote}{\fnsymbol{footnote}}
\footnotetext[1]{
Department of Mathematics and Statistics, San Jose State University, San Jose, CA; {\tt nicholas.crawford@sjsu.edu.}\\
}
\footnotetext[2]{
Department of Mathematics and Statistics, San Jose State University, San Jose, CA; {\tt sogol.jahanbekam@sjsu.edu.}\\{Research was supported in part by by NSF grant CMMI-1727743 and Woodward Fund.}
}

\footnotetext[3]{
 Computer Science Department, San Jose State University, San Jose, CA;
{\tt katerina.potika@sjsu.edu }.}
\renewcommand{\thefootnote}{\arabic{footnote}}

\baselineskip18pt

\section{Introduction}

A coloring of the vertices of a graph is \textit{proper} if adjacent vertices receive different colors. A graph $G$ is  $k$-\textit{colorable} if it has a proper coloring using $k$ colors.  The \textit{chromatic number} of a graph $G$, written as $\chi(G)$, is the smallest integer $k$ such that $G$ is $k$-colorable.

The proper coloring problem is one of the  most studied
problems in graph theory. To determine the chromatic number of a graph, one should find the smallest integer $k$ for which the graph is $k$-colorable.  The $k$-colorability problem, for $k\geq 3$, is one of the classical NP-complete
problems \cite{W}.

Even approximating the chromatic number has been shown to be a very hard problem.  Lund and Yannakakis \cite{LY}  have shown that there is an $\epsilon$ such that the chromatic number of a general $n$-vertex graph cannot be approximated with ratio $n^{\epsilon}$ unless
$P = NP$.



In 1971,  Christofides obtained the first non-trivial algorithm computing the chromatic
number of $n$-vertex graphs  running in  $n!n^{O(1)}$ time \cite{CH}.  Five years later Lawler \cite{LA} used dynamic programming and enumerations of maximal independent sets to improve it to an algorithm with running time $O^*(2.4423^n)$.  Later  the running time was improved by Eppstein \cite{EP}.  The best-known complexity for determining the chromatic number of graphs is due to Bj\"orklund, Husfeldt, and Koivisto \cite{BHK} who used a combination
of inclusion-exclusion and dynamic programming to  develop a $O(2^n)$ algorithm to determine the chromatic number of $n$-vertex graphs.


 The $k$-colorability problem for small values of $k$, like 3 and 4 is also a highly-studied problem that  has  attracted a lot of attention. Not only this problem has its own importance, but also improving the bounds for small values of $k$ could be used to improve the bound for higher values of $k$ and as a result, improve the complexity of the general coloring problem. The fastest known algorithm deciding if a graph is 3-colorable or not runs in  $O(1.3289^n)$ time and is due to Beigel and Eppstein \cite{BE}. The fastest known algorithm for 4-colorability  runs in $O(1.7272^n)$ and is due to Fomin, Gaspers, and Saurabh \cite{FGS}.
 
 In this paper, we prove the following.

 \begin{theorem}\label{main}
Let $G$ be an $n$-vertex graph with maximum degree $\Delta$ and minimum degree $\delta$, where $\delta(G)\geq 8$.  We can determine in $O(1.3158^{n-0.7\Delta})$ time if $G$ is $3$-colorable or not.
\end{theorem}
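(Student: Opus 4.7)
The plan is to combine a one-shot branching on a maximum-degree vertex with a refined recursive algorithm in the (3,2)-CSP style of Beigel--Eppstein. First, reformulate 3-colorability on $G$ as a (3,2)-CSP: each vertex is a variable whose domain is a list of allowed colors (initially $\{1,2,3\}$), and each edge is a binary disequality constraint. Pick a vertex $v$ with $\deg(v) = \Delta$ and create three subproblems by assigning each of the three colors to $v$. In every subproblem, the $\Delta$ neighbors of $v$ are forced to lose a color and therefore become 2-list variables, while the remaining $n-1-\Delta$ vertices stay 3-list. The savings come from analyzing the subsequent recursion with a weighted measure that charges less for 2-list variables.

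The bulk of the work is to establish a weighted running time bound of the form $T(I) = O\!\left(c^{\,n_3 + \tau n_2}\right)$ for the recursive algorithm on the remaining (3,2)-CSP instance $I$, with $c \le 1.3158$ and $\tau \le 0.3$, where $n_3$ and $n_2$ count the 3-list and 2-list variables respectively. I would walk through every reduction rule (elimination of variables appearing in zero/one constraint, collapsing degree-2 chains, propagating 1-list variables, handling duplicate and triangle constraints) and every branching rule (branching on a variable of highest constraint-degree, on a variable participating in a triangle, or on a pair of variables sharing a short path), and for each branching rule verify the weighted recurrence
\[
\sum_{i} c^{-a_i} \le 1,
\]
where $a_i$ is the decrease in the weighted measure $\mu = n_3 + \tau n_2$ along branch $i$. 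The assumption $\delta(G) \ge 8$ is used at the top of the recursion to rule out the low-degree sub-cases of the Beigel--Eppstein framework whose branching vectors would otherwise force a weaker constant.

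Combining the two pieces, each of the three top-level branches takes time $O\!\left(c^{(n-1-\Delta)+\tau\Delta}\right)$, so the total is
\[
3 \cdot c^{(n-1-\Delta)+\tau\Delta} \;=\; O\!\left(1.3158^{\,n-(1-\tau)\Delta}\right) \;=\; O\!\left(1.3158^{\,n-0.7\Delta}\right),
\]
which is the theorem's claim.

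The main obstacle is the case analysis backing the weighted bound. The pair $(c, \tau) = (1.3158, 0.3)$ leaves essentially no slack: every branching vector arising across the dozens of sub-cases must fit within the weighted recurrence, and the most delicate branches---those that simultaneously eliminate a mix of 3-list and 2-list variables---need to be tuned carefully so that the decrease in $\mu$ is large enough. A secondary subtlety is that $\delta(G) \ge 8$ is only a property of the original input; recursive subproblems may contain 2-list variables of much lower degree in the constraint graph, so each reduction must either preserve enough structure or be shown to operate within the weighted budget without relying on the global degree hypothesis. Managing this distinction cleanly---applying the min-degree assumption exactly where it is needed and nowhere else---is the delicate part of the argument.
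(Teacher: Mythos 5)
Your plan hinges on an unproven core lemma that is almost certainly false as stated: a weighted bound $T(I)=O\bigl(c^{\,n_3+\tau n_2}\bigr)$ with $c\le 1.3158$ and $\tau\le 0.3$ for the recursive (3,2)-CSP solver. Setting $n_2=0$, this claims that (3,2)-CSP instances with all 3-list variables are solvable in $O(1.3158^{n})$ time, which would beat both the best known (3,2)-CSP bound of $O(1.3645^n)$ (Theorem~\ref{BE2}) and even the best known 3-coloring bound of $O(1.3289^n)$ of Beigel and Eppstein. You cannot obtain this by ``walking through'' their reduction and branching rules with a reweighted measure: their worst-case branching vectors are exactly what pins their constant at $1.3645$, and no new branching rules are proposed. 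You correctly identify the escape hatch --- that $\delta(G)\ge 8$ might tame the bad branches --- and you equally correctly observe that this hypothesis does not survive into recursive subinstances, but you leave that tension unresolved; it is not a ``secondary subtlety,'' it is the whole difficulty, and without a mechanism to exploit the degree condition throughout the recursion the claimed recurrences will not close.

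The paper takes a genuinely different and more elementary route that avoids any weighted CSP analysis. It proves a stronger statement about a distinguished vertex $v$ (only vertices outside $N[v]\cup N^2(v)$ need degree $\ge 8$) and inducts on $n-d(v)$. The degree hypothesis is consumed not inside a CSP recursion but in a graph-level branching step: if some vertex $x$ lies at distance $\ge 3$ from $v$, then either $x$ and $v$ get the same color (contract them, so $d(v)$ jumps by at least $8$ and $n$ drops by $1$) or different colors (add the edge $xv$, so $d(v)$ grows by $1$); the recurrence $1.3158^{-6.6}+1.3158^{-0.7}<1$ closes the induction. Once $v$ dominates everything within distance $2$, the algorithm finishes either by a single \emph{unweighted} call to the $O(1.3645^m)$ CSP solver on the $n-d(v)-1$ vertices outside $N[v]$ (when $d(v)>0.309n$, so the exponent is already small enough) or by brute-forcing the $2^{d(v)}$ two-colorings of $N(v)$ followed by polynomial-time list coloring with lists of size $2$ (when $d(v)\le 0.309n$). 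If you want to salvage your approach, you would need to either prove the weighted CSP lemma (a substantial new result) or restructure so that the minimum-degree condition is spent before entering the black-box CSP solver, which is essentially what the paper does.
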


\begin{theorem}\label{main2}
Let $G$ be an $n$-vertex graph with maximum degree $\Delta$ and minimum degree $\delta$, where $\delta(G)\geq 7$.  We can determine in  $O(1.32^{n-0.73\Delta})$ time if $G$ is $3$-colorable or not.
\end{theorem}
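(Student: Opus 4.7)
The plan is to adapt the branching argument of Theorem~\ref{main} so that it accommodates vertices of degree exactly $7$. I would start by selecting a vertex $v$ realizing the maximum degree $\Delta$. Since $\delta(G)\geq 7$, either $\Delta\geq 8$ (and a high-degree vertex is available in the style of Theorem~\ref{main}) or $G$ is $7$-regular. In either case, the first branching step fixes the color of $v$, yielding three subproblems in each of which the $\Delta$ neighbors of $v$ are restricted to $2$-element color lists. This is the source of the $0.73\,\Delta$ saving in the exponent.

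Once $v$ is colored, I would run a variant of the Beigel-Eppstein procedure on the resulting list-coloring instance, where the base running time contributes the $1.32^{n}$ factor. To extract the $-0.73\,\Delta$ saving, I would use a weighted measure in the recurrence analysis: vertices with $3$-element lists are counted with full weight, and vertices with $2$-element lists are counted with a reduced weight of about $0.27$. Each step of the algorithm (picking a vertex, branching on colors, propagating list reductions) decreases this weighted measure; the usual branching factors, reinterpreted in this weighted setting, then yield a total running time of $1.32^{w(G)}$ where $w(G)\leq n-0.73\,\Delta$ after the initial branching on $v$.

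The main obstacle is the $7$-regular case, where branching on a single vertex produces weaker savings than the $\delta\geq 8$ argument of Theorem~\ref{main} allows. Here I would branch more aggressively on $N[v]$: enumerate all proper $3$-colorings of the subgraph induced by $\{v\}\cup N(v)$, and for each, propagate the forced $2$-list restrictions to the second neighborhood $N^2(v)$. The count of such colorings depends on the number of edges inside $N(v)$, so a short case analysis on the possible neighborhood configurations is needed. The looser constants of Theorem~\ref{main2} compared with Theorem~\ref{main} (that is, $1.32$ and $0.73$ versus $1.3158$ and $0.7$) provide exactly the slack required to absorb the extra cases introduced by degree-$7$ vertices.

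Finally, I would verify that the weighted branching recurrence solves to $1.32^{n-0.73\Delta}$ in all branches, and combine the subcases ($\Delta\geq 8$, $7$-regular) into a single uniform bound. The technical heart of the argument is thus the weighted recurrence analysis together with the neighborhood case analysis for degree-$7$ vertices; the remaining steps are straightforward extensions of known list-coloring branching techniques.
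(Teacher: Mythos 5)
There is a genuine gap, and it sits exactly in the step you call the technical heart. You assert that a weighted variant of the Beigel--Eppstein branching, with weight $1$ on $3$-list vertices and weight about $0.27$ on $2$-list vertices, runs in time $1.32^{w(G)}$ because ``the usual branching factors, reinterpreted in this weighted setting'' still close. Taken at face value this is too strong: on an instance with no $2$-list vertices it asserts a $1.32^{n}$ algorithm for $3$-colorability, beating the best known $O(1.3289^{n})$ bound of Beigel and Eppstein. After your initial branching on $v$, only the $\Delta$ neighbors of $v$ carry $2$-lists; when $\Delta=7$ that is a constant number of discounted vertices and cannot lower the exponential base. So the base $1.32<1.3289$ must be paid for by the hypothesis $\delta(G)\geq 7$, and your sketch never uses that hypothesis inside the recurrence --- the $7$-regular subcase only enumerates the $O(1)$ proper colorings of $N[v]$, which changes nothing asymptotically. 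Relatedly, the dichotomy ``$\Delta\geq 8$, so argue as in Theorem~\ref{main}'' does not work: the proof of Theorem~\ref{main} (via Theorem~\ref{4}) requires every vertex outside $N[v]\cup N^{2}(v)$ to have degree at least $8$, which one high-degree vertex in a graph with $\delta=7$ does not provide.

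The paper's route shows where the minimum-degree hypothesis has to enter. It proves, by induction on $n-d(v)$, that the bound $O(1.32^{n-0.73\,d(v)})$ holds whenever all vertices outside $N[v]\cup N^{2}(v)$ have degree at least $7$. The engine is a degree-amplification branching: pick $x\notin N[v]\cup N^{2}(v)$ and decide whether $x$ receives the color of $v$ or not; in the first branch contract $x$ into $v$, which (since $x$ and $v$ share no neighbor) raises $d(v)$ by at least $7$ while dropping $n$ by $1$, and in the second branch add the edge $xv$, raising $d(v)$ by $1$. The recurrence $1.32^{m-6.11}+1.32^{m-0.73}<1.32^{m}$ with $m=n-0.73\,d(v)$ closes the induction. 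The process terminates either when $d(v)>0.309n$, where the residual problem is handed to the $(3,2)$-CSP solver on $n-d(v)-1$ variables in time $O(1.3645^{\,n-d(v)-1})$, which is within the claimed bound in that range, or when $V(G)=N[v]\cup N^{2}(v)$, where one enumerates the at most $2^{d(v)}$ colorings of $N(v)$ and finishes with polynomial-time list coloring with lists of size $2$ on $N^{2}(v)$. To salvage your plan you would need either to carry out the weighted recurrence analysis in full and explain where degree $7$ buys the improved branching vectors, or to incorporate a degree-amplification step of this kind.
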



For smaller minimum degree conditions, results similar to the
statements of Theorems \ref{main} and \ref{main2}
can be proved, but the complexity would increase. For example, the
3-colorability of a graph with minimum degree 6 can be
determined in $O(1.368^{n-.7d(v}))$ time.
This result is not an improvement compared to that of Beigel and
Eppstein \cite{BE} however, because  $1.368>1.3289$.

\section{Definitions, Notation, and Tools }

In this section we define the terms and notation we use to prove Theorems \ref{main} and \ref{main2}.

For a graph $G$ with vertex set $V(G)$ and edge set $E(G)$, we denote the minimum degree by $\delta(G)$ and the maximum degree by $\Delta(G)$. We suppose all graphs studied in this note are simple. Let $v$ be a vertex in $G$. The degree of $v$ in $G$ is denoted by $d_G(v)$ or simply $d(v)$ (when there is no fear of confusion).  The open neighborhood of $v$ in $G$, denoted by $N_G(v)$ (or simply $N(v)$), is the set of neighbors of $v$ in $G$ and $N^2(v)$ denotes the set of vertices in $G$ that are in distance (exactly) 2 from $v$. Therefore $N(v)\cap N^2(v)=\emptyset$. The closed neighborhood of $v$ in $G$, denoted by $N[v]$, is equal to $N(v)\cup \{v\}$. 

Let $A$ be a subset of $V(G)$. The graph $G[A]$ is the induced subgraph of $G$ with vertex set $A$.  Let $u$ and $v$ be two vertices of $G$. The graph $G/uv$ is the graph obtained from $G$ after contracting (identifying) the vertices $u$ and $v$ in $G$ and replacing multiple edges by one edge, so that the resulting graph is simple.

Suppose for each vertex $v$ in $V(G)$, there exists a list of colors denoted by $L(v)$. A \textit{proper list coloring} of $G$ is a choice function that maps every vertex $v$ to a color in the list $L(v)$ in such a way that the coloring is proper. A graph is $k$-\textit{choosable} if it has a proper list coloring whenever each vertex has a list of size $k$. 


A \textit{Boolean expression} is a logical statement that is either TRUE or FALSE. In computer science, the \textit{Boolean satisfiability problem} (abbreviated to SAT) is the problem of determining if there exists an interpretation that satisfies a given Boolean expression.  The 3-satisfiability problem or \textit{3-SAT} problem is a special case of SAT problem,      where the Boolean expression  can be divided into clauses such that every clause contains  three literals.

The constraint satisfiability  problem is a satisfiability problem which is not necessarily Boolean. In an $(r,t)-CSP$  instance, we are given a collection of $n$ variables, each of which can be given one of up to $r$ different colors and a set of constraints, where each constraint is expressed using $t$ variables, i.e. certain color combinations are forbidden for $t$ variables.


By the above definition 3-SAT is the same as $(2,3)$-CSP.  It was proved in \cite{BE}  that each $(a,b)$-CSP instance is equivalent to a $(b,a)$-CSP instance. Therefore any 3-SAT is equivalent to a $(3,2)$-CSP instance.



The following result was proved by Beigen and Eppstein    in \cite{BE}. We will apply this theorem in the proof of Theorem \ref{main}.

\begin{theorem}\label{BE2} \cite{BE}
$n$-variable (3,2)-CSP instances can be solved in  $O(1.3645^n)$ time.
\end{theorem}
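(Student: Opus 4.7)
The plan is to solve $(3,2)$-CSP by a branching-and-reduction algorithm whose running time is bounded using a weighted measure on the variables. I would maintain for each variable $v$ a list $L(v)$ of still-permitted colors, initially of size at most $3$, together with the set of binary constraints. Before branching I would apply the standard reduction rules: if some $L(v)$ is empty, report unsatisfiable; if $|L(v)|=1$, fix $v$ to that color and propagate the forbidden pairs into the lists of its neighbors; if some constraint has no allowed pair left, report unsatisfiable; and if, given the current lists, a constraint forces a color at one endpoint, apply that forced assignment and propagate. These rules are polynomial per application and never grow a list.

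Once reductions stabilize, the key observation is that if every variable has $|L(v)|\le 2$, the remaining instance is a $(2,2)$-CSP, i.e.\ $2$-SAT in disguise, and can be solved in polynomial time via the implication digraph. So the interesting case is that some variable $v$ has $|L(v)|=3$. I would branch on $v$ by choosing one color $c\in L(v)$ and producing two subproblems: one in which $v$ is assigned $c$, which shrinks the lists of neighbors of $v$ in every constraint that forbids $c$ at $v$, and one in which $c$ is simply deleted from $L(v)$, dropping $v$ to a $2$-valued variable on which future $2$-SAT propagation can act.

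The complexity analysis would proceed with a weighted measure $\mu=w_3 n_3 + w_2 n_2$, where $n_i$ is the number of variables with $|L(v)|=i$ and $w_3>w_2>0$ are constants to be optimized. In the delete-color branch $v$ drops from size $3$ to size $2$, decreasing $\mu$ by at least $w_3-w_2$; in the fix-color branch $v$ disappears and each affected neighbor either loses a color or becomes fixed, so $\mu$ decreases by $w_3$ plus additional terms depending on how many neighbors propagate. Selecting $v$ so that several of its neighbors are either $2$-valued or share constraints that trigger further propagation ensures the sum of decreases in the fix branch is large. Optimizing $w_2,w_3$ in the resulting recurrence $T(\mu)\le T(\mu-(w_3-w_2))+T(\mu-w_3-\text{extra})$ is designed to yield base at most $1.3645$.

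The main obstacle is guaranteeing enough propagation in the fix-$v$ branch so that the recurrence is tight in every configuration: low-degree $3$-valued variables, nearly isolated constraint components, and patterns in which fixing $v$ removes only a single color from a single neighbor are the delicate cases. I would handle them either by preferring to branch on a $2$-valued variable whose propagation cascades via $2$-SAT, by solving isolated components directly, or by augmenting the measure with an auxiliary term that charges constraint density so that sparse configurations can be reduced by other means. Once every such local configuration is shown to drop $\mu$ by the required amount, the global recurrence combined with $\mu\le w_3 n=O(n)$ yields the claimed $O(1.3645^n)$ bound.
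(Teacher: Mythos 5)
The paper does not actually prove this statement: it is imported as a black box from Beigel and Eppstein \cite{BE}, so the only ``proof'' in the paper is the citation. Your outline does follow the same general strategy that \cite{BE} uses --- reduction rules, the observation that an all-$2$-valued instance is $2$-SAT and hence polynomial, branching on a $3$-valued variable, and a weighted measure $\mu=w_3n_3+w_2n_2$ --- so in spirit you are reconstructing the right algorithm. But as written it is a framework, not a proof of the bound $O(1.3645^n)$.

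The genuine gap is the exhaustive local case analysis that certifies a sufficient drop of $\mu$ in \emph{every} branch; that analysis is where the constant $1.3645$ comes from and it is the bulk of the technical content of \cite{BE}. Your recurrence $T(\mu)\le T(\mu-(w_3-w_2))+T(\mu-w_3-\text{extra})$ establishes nothing until ``extra'' is bounded below in all configurations, and in the configurations you yourself flag as delicate it can be essentially zero: if the chosen $3$-valued variable has low degree, or its constraints touch only $3$-valued neighbours each losing a single colour, the fix branch gains only $w_3$ plus a few terms of size $w_3-w_2$. In the extreme no-propagation case the recurrence degenerates to $T(\mu)\le T(\mu-w_3)+T(\mu-(w_3-w_2))$ with $\mu\le w_3n$, and no choice of $w_2,w_3$ brings its root anywhere near $1.3645$; one must instead show such configurations cannot survive the reduction phase, which requires a substantial catalogue of additional rules (isolated and low-degree variables, implied and duplicate constraints, pairs of variables sharing several constraints, and so on) followed by a branching analysis of the surviving local patterns. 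Your closing paragraph gestures at these fixes but does not carry them out, so the stated constant is not established. For the purposes of this paper the correct course is what the authors do: cite \cite{BE} rather than reprove it.
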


\section{Proof of Theorem \ref{main}}

To prove Theorem \ref{main} we prove the following stronger theorem.

\begin{theorem}\label{4}
Let $G$ be a graph and $v$ be a vertex  in  $G$  with the property that all vertices in $V(G)-(N[v]\cup N^2(v))$ have degree at least 8 in $G$, then we can determine in time $O(1.3158^{n-0.7d(v)})$  if $G$ is $3$-colorable or not.
\end{theorem}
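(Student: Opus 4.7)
The plan is to reduce $3$-colorability of $G$ to a $(3,2)$-CSP instance, branch on the color of $v$, and analyze the resulting branching algorithm using a weighted potential that credits the $2$-valued variables created at the neighbors of $v$.

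First, I would encode $3$-colorability of $G$ as a $(3,2)$-CSP in which each vertex is a variable over $\{1,2,3\}$ and each edge enforces a disequality. Then I would branch on the color assigned to $v$, producing three sub-instances. In each sub-instance the color of $v$ is fixed; after removing $v$, the $d(v)$ neighbors of $v$ carry $2$-element lists, while the remaining $n-1-d(v)$ vertices of $G-v$ still have $3$-element lists.

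Next, I would analyze each sub-instance by the weighted potential $\mu = n_3 + 0.3\,n_2$, where $n_3$ and $n_2$ count the variables with $3$- and $2$-element lists, respectively. For our sub-instance $\mu = (n-1-d(v)) + 0.3\,d(v) = n - 1 - 0.7\,d(v)$. The goal is to show that a careful branching algorithm in the spirit of Theorem~\ref{BE2} solves mixed $(3,2)$-CSP instances arising in this way in $O(1.3158^{\mu})$ time; combined with the factor of $3$ from the initial branching on $v$, this yields the claimed bound $O(1.3158^{n - 0.7 d(v)})$.

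The main obstacle is verifying that the weighted branching recurrences $T(\mu) \leq \sum_i T(\mu - \delta_i)$ have branching factor at most $1.3158$, for every choice of pivot variable and every distribution of $2$- and $3$-valued lists among its neighbors. For a $3$-valued pivot $u$ of degree at least $8$ whose neighbors are all $3$-valued, each of the three branches decreases $\mu$ by at least $1 + 0.7 \cdot 8 = 6.6$, and one checks that $1.3158^{6.6} \geq 3$; the subtler cases occur when the pivot has mixed or all-$2$-valued neighborhoods, and a careful choice of pivot together with auxiliary reduction rules is needed. The hypothesis that vertices outside $N[v] \cup N^2(v)$ have degree at least $8$ is precisely what keeps the worst-case recurrences under control throughout the recursion, since it rules out the otherwise-problematic case of low-degree pivots appearing far from $v$.
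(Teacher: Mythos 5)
Your proposal reduces the whole theorem to a single unproved claim: that a branching algorithm solves the mixed $(3,2)$-CSP instances arising here in $O(1.3158^{\mu})$ time with $\mu=n_3+0.3\,n_2$. That claim is the entire technical content of the theorem, and the one recurrence you verify (a $3$-valued pivot of degree at least $8$ with all-$3$-valued neighbors) is not the bottleneck. The bottleneck is exactly the cases you defer. The $2$-valued variables in your instance are the vertices of $N(v)$, and the degree hypothesis says nothing about them: a vertex $u\in N(v)$ may have degree as low as $1$ in $G-v$, so a branch on $u$ (or on a $3$-valued pivot most of whose constraint-neighbors are such low-degree $2$-valued variables) gains only a few multiples of $0.3$ in $\mu$ across two or three branches, which is nowhere near the decrease of $\log_{1.3158}2\approx 2.53$ per branch that the factor $1.3158$ requires. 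Likewise, your closing sentence overstates what the hypothesis buys: it constrains only vertices outside $N[v]\cup N^2(v)$ (the vertices of $N^2(v)$, which carry the $3$-valued lists adjacent to the $2$-valued ones, may have any degree), and in any case vertex degrees drop as the recursion eliminates variables, so "degree at least $8$" is not an invariant of the subinstances. Note also that for an all-$3$-valued instance your claim would give $O(1.3158^{n})$ for general $(3,2)$-CSP, improving on the $O(1.3645^{n})$ of Theorem~\ref{BE2} and on Beigel--Eppstein's $O(1.3289^{n})$ for $3$-coloring; an argument that strong cannot be waved through.

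The paper avoids analyzing any branching algorithm on mixed instances. It inducts on $n-d(v)$ and splits into three cases by comparing $d(v)$ with $0.309n$. If $d(v)>0.309n$, it builds a $(3,2)$-CSP on the $n-d(v)-1$ vertices of $V(G)-N[v]$ only (after first reducing so that $G[N(v)]$ has maximum degree $1$, encoding the neighbors' color constraints directly into binary constraints on $N^2(v)$) and invokes Theorem~\ref{BE2} as a black box; the inequality $1.3645^{\,n-d(v)}\le 1.3158^{\,n-0.7d(v)}$ holds precisely because $d(v)$ is large. If $d(v)\le 0.309n$ and $V(G)=N[v]\cup N^2(v)$, it enumerates the at most $2^{d(v)}$ colorings of $N(v)$ with $\{2,3\}$ and finishes each with a polynomial-time list-coloring step on lists of size at most $2$. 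Otherwise it picks a vertex $x$ outside $N[v]\cup N^2(v)$, which is guaranteed degree at least $8$ and no common neighbor with $v$, and branches on $G/xv$ versus $G\cup xv$, driving the induction via $1.3158^{-6.6}+1.3158^{-0.7}<1$. This is where the degree-$8$ hypothesis is actually used --- once per induction step on a vertex far from $v$ --- rather than throughout a CSP branching tree. To salvage your approach you would have to carry out the full measure-and-conquer case analysis yourself, which is a substantial project and not obviously achievable at base $1.3158$.
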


\begin{proof}
We apply induction on $n-d(v)$  to prove the assertion. Since $G$ is simple, we have $d(v)\leq n-1$. Therefore $n-d(v)\geq 1$.

 When $n-d(v)=1$, the graph $G$ has a vertex $v$ of degree $n-1$. In this case $G$ is 3-colorable if and only if $G-v$ is 2-colorable. Since 2-colorability can be determined in polynomial time (for example using a simple Breadth First Search algorithm we can determine in linear time if the graph is bipartite), the assertion holds in this case.


Let us assume that for any $n$-vertex graph $H$, with a vertex $v$ of degree $d(v)$, where $n-d(v)\leq k$ and $k\geq 1$, we can determine if $H$ is 3-colarable in  $O(1.3158^{n-0.7d(v)})$ time, given all vertices in $V(H)-(N[v]\cup N^2(v))$ have degree at least $8$ in $H$.

We prove that the Theorem holds when the graph $G$ is an $n$-vertex graph having a vertex $v$ with  $n-d(v)=k+1$, where all vertices in $V(G)-(N[v]\cup N^2[v])$ have degree at least $8$ in $G$.

 If there are three vertices $u_1,u_2,u_3$ in $N(v)$ with $u_1u_2,u_2u_3\in E(G)$ (see Figure 1), then $u_1u_3\in E(G)$ implies that $G$ is not 3-colorable, and $u_1u_3\not\in E(G)$ implies that the vertices $u_1$ and $u_3$ must get the same colors in any proper 3-coloring of $G$. As a result, we can identify $u_1$ and $u_3$ in $G$ and study the smaller graph. Hence we may suppose that $G[N(v)]$ has no vertex of degree at least 2.

\begin{figure}[htp]
    \centering
    \includegraphics[width=4cm]{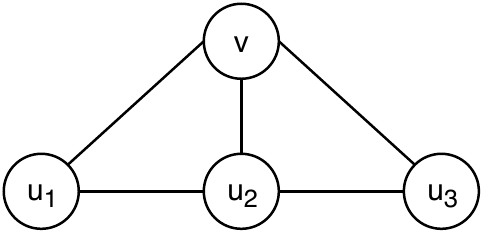}
    \caption{When $G[N(v)]$ has a vertex $u_2$ of degree at least 2.}
    \label{fig:path2}
\end{figure}

We consider three cases.

\subsection{Case 1: When  $d(v)>0 .309n$.}
\label{proof:Case1}

In this case we  transfer the problem into a (3,2)-CSP problem with $n-d(v)-1$ vertices. With no loss of generality we may suppose that in any coloring the color of $v$ is 1.  As a result, the vertices in $N(v)$ must get colors in $\{2,3\}$. We create a (3,2)-CSP on $V(G)-N[v]$ in such a way that $G$ is 3-colorable if and only if the (3,2)-CSP problem has a solution.

Suppose $N(v)=\{u_1,\ldots,u_r,w_1,\ldots,w_r,z_1,\ldots,z_t\}$, where $u_1w_1,\ldots,u_rw_r$ are the only edges with both ends in $N(v)$. This holds because  $G[N(v)]$ has no vertex of degree at least 2.

\begin{figure}[htp]
    \centering
    \includegraphics[width=4cm]{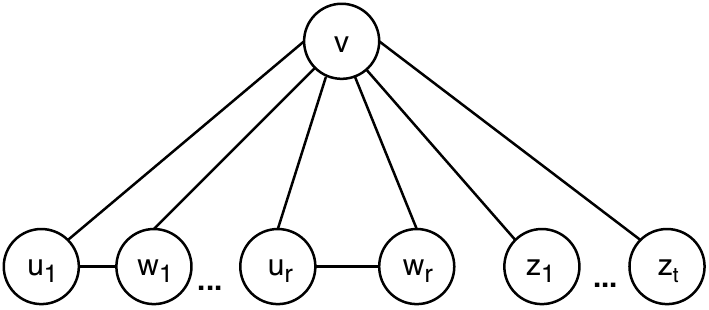}
    \caption{Notation of Case 1.}
    \label{fig:path2}
\end{figure}

If $u_i$ and $w_i$,  for some integer $i$, have a common neighbor $y$ in $N^2(v)$, then in any proper 3-coloring of $G$ the vertices $v$ and $y$ must get the same color.  As a result we can contract $v$ and $y$ in $G$ and study the smaller graph. Hence we may suppose that $u_i$ and $w_i$ have no common neighbors in $N^2(v)$.

Let $H$ be a graph with $V(H)=V(G)-N[v]$. We define a (3,2)-CSP on $H$ as follows. 

For vertices $x,y\in V(H)$, if $xy\in E(G)$, then we need to avoid patterns 1-1, 2-2, and 3-3 on $x$ and $y$, i.e. we need $(x,y)\neq (1,1), (2,2),(3,3)$. If $x$ and $y$ have a common neighbor in $N(v)$ (in $G$), then we need to avoid patterns 2-3 and 3-2 on $x$ and $y$ (i.e. $(x,y)\neq (2,3),(3,2)$), since otherwise we cannot extend the coloring on $V(H)$ to a proper 3-coloring of $G$. Finally, if $xu_i,yw_i\in E(G)$, then we need to avoid patterns 2-2 and 3-3 on $x$ and $y$ (i.e. $(x,y)\neq (2,2),(3,3)$), since otherwise we cannot extend the coloring on $V(H)$ to a proper 3-coloring of $G$. 

By the above construction of the (3,2)-CSP  on $H$, the graph $G$ is 3-colorable if and only if the (3,2)-CSP on $H$ has a solution. Note that constructing $H$ takes a polynomial time process and by Theorem \ref{BE2} determining if the (3,2)-CSP instance  on $H$ has a solution or not has complexity $O((1.3645)^{n-d(v)-1})$. Since
$O((1.3645)^{n-d(v)})\subseteq O(1.3157^{n-0.7d(v)})$ for $d(v)>0.309n$. Therefore a polynomial factor of $O(1.3157^{n-0.7d(v)})$ is a subset of $O(1.3158^{n-0.7d(v)})$, as desired.

\subsection{Case 2. When $V(G)=N[v]\cup N^2(v)$ and $d(v)\leq 0.309n$.}
\label{proof:Case2}
In this case with no loss of generality we may suppose that in any coloring the  color of $v$ is 1. As a result, the vertices in $N(v)$ must get colors in $\{2,3\}$. Therefore there are at most $2^{d(v)}$ different possibilities for the colors of the vertices in $N[v]$. Since $V(G)=N[v]\cup N^2(v)$, all vertices in $V(G)-N[v]$ have at least one neighbor in $N[v]$. 

Let $c$ be a proper coloring over $G[N[v]]$ using colors $2$ and $3$. As a result, to extend this coloring to a proper coloring of $G$ each vertex in $N^2(v)$ must avoid at least one color (the color(s) of its neighbor(s) in $N[v]$). Hence each vertex in $N^2(v)$ has a list of size at most 2, such that $c$ can be extended to a proper coloring of $G$ if and only if there exists a proper list coloring on $N^2(v)$. Note that we can determine in polynomial time if there exists a proper list coloring on the vertices of a graph, when each list has size at most 2 (see \cite{KT} ).

Since there are at most $2^{d(v)}$ proper coloring on $N(v)$ in which all vertices get colors in $\{2,3\}$,  we can determine in a polynomial factor of $2^{d(v)}$ if $G$ is 3-colorable or not. Since $d(v)\leq 0.309n$, we have $2^{d(v)}\leq  (1.31578)^{n-0.7d(v)}$. Hence $2^{d(v)}\subseteq  O(1.31578)^{n-0.7d(v)}$, which implies $poly(n) 2^{d(v)}\subseteq  O(1.3158)^{n-0.7d(v)}$, as desired.

\subsection{Case 3. When $V(G)\neq N[v]\cup N^2(v)$ and $d(v)\leq 0.309n$.}
\label{proof:Case3}
  
  Let $x$ be a vertex in $V(G)-(N[v]\cup N^2(v))$. In any proper 3-coloring of $G$, if it exists, the vertex $x$ either gets the same color as $v$ or $x$ receives a different color than $v$.  Therefore it is enough to determine if any of the graphs $G/xv$ and $G\cup xv$ are 3-colorable.  Recall that by our hypothesis $d(x)\geq 8$.

Let $H=G/xv$ and $H'=G\cup xv$.  The graph $H$ has $n-1$ vertices. Since $x$ has degree at least 8 in $G$ and since it has no common neighbor with $v$, we have $d_H(v)\geq d_G(v)+8$. Similarly, we have $n(H')=n(G)$ and $d_{H'}(v)=d_G(v)+1$. Therefore by the induction hypothesis, we can determine in $O(1.3158^{n-1-0.7(d(v)+8)})$ time if the graph $H$ is 3-colorable and we can determine in $O(1.3158^{n-0.7(d(v)+1)})$ time if the graph $H'$ is 3-colorable. Therefore to determine if $G$ is 3-colorable, we require an algorithm of complexity at most $O(1.3158^{n-0.7d(v)-6.6})+O(1.3158^{n-0.7d(v)-0.7})$. 

Note that $1.3158^{n-0.7d(v)-6.6}+1.3158^{n-0.7d(v)-0.7}< 1.3158^{n-0.7d(v)}$. Therefore the assertion holds.
\end{proof}

\section{Proof of Theorem \ref{main2}}

The proof of Theorem \ref{main2} is very similar to the proof of Theorem \ref{main}. To avoid redundancy we skip the parts of the proof that are similar. We prove the following stronger result.

\begin{theorem}
Let $G$ be a graph and $v$ be a vertex  in  $G$  with the property that all vertices in $V(G)-(N[v]\cup N^2(v))$ have degree at least 7 in $G$, then we can determine in  $O(1.32^{n-0.73d(v)})$ time if $G$ is $3$-colorable or not.
\end{theorem}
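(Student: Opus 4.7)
The plan is to mirror the inductive structure of the proof of Theorem~\ref{4}, performing induction on $n - d(v)$. The base case $n - d(v) = 1$ is handled identically: such a $G$ is $3$-colorable if and only if $G - v$ is bipartite, which is polynomial. As a preprocessing step, if $N(v)$ contains a path $u_1u_2u_3$, then either $u_1u_3 \in E(G)$ (so $G$ is not $3$-colorable) or $u_1$ and $u_3$ must receive the same color in every proper $3$-coloring, so we identify them and recurse on a smaller graph; hence we may assume $G[N(v)]$ has maximum degree at most $1$.

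I would then split into three cases depending on $d(v)$. When $d(v)$ exceeds a threshold, I transfer the problem to an $(n - d(v) - 1)$-variable $(3,2)$-CSP instance on $V(G) - N[v]$, exactly as in Case~1 of Theorem~\ref{4}: after fixing $v$ to color $1$ and forcing $N(v) \subseteq \{2,3\}$, constraints on $x,y \in V(G) - N[v]$ come from edges $xy$ of $G$, from common neighbors in $N(v)$, and from the matched pairs $u_iw_i$ with $x \in N(u_i)$, $y \in N(w_i)$. By Theorem~\ref{BE2} this costs $O(1.3645^{n - d(v) - 1})$, and the cutoff is chosen so that this is absorbed into $O(1.32^{n - 0.73 d(v)})$; solving $(\ln 1.3645 - \ln 1.32)\,n \leq (\ln 1.3645 - 0.73 \ln 1.32)\,d(v)$ gives a threshold near $0.307 n$.

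In Case~2, where $V(G) = N[v] \cup N^2(v)$ and $d(v)$ is below the Case~1 threshold, I enumerate the at most $2^{d(v)}$ proper colorings of $G[N(v)]$ with $\{2,3\}$; each such coloring induces a list assignment of size at most $2$ on $N^2(v)$, whose list-colorability is decidable in polynomial time via the standard $2$-SAT reduction. The requirement $2^{d(v)} \leq 1.32^{n - 0.73 d(v)}$ reduces to $d(v)(\ln 2 + 0.73 \ln 1.32) \leq n \ln 1.32$, giving a cutoff near $0.31 n$ that meshes with the Case~1 cutoff.

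The main obstacle is Case~3: pick $x \in V(G) - (N[v] \cup N^2(v))$ and branch on $G/xv$ versus $G + xv$. Using $d(x) \geq 7$ and the fact that $x$ and $v$ share no neighbors, the contraction produces an $(n-1)$-vertex graph whose merged vertex has degree at least $d(v) + 7$, while the edge addition produces an $n$-vertex graph with $d_{G + xv}(v) = d(v) + 1$. In both operations, vertices outside the relevant $N[v] \cup N^2(v)$ set retain degree at least $7$, since neither contraction nor edge addition decreases degrees, so the induction hypothesis applies and the total cost is bounded by
\[
O\!\left(1.32^{n - 1 - 0.73(d(v) + 7)}\right) + O\!\left(1.32^{n - 0.73(d(v) + 1)}\right).
\]
The delicate verification is that this sum fits inside $O(1.32^{n - 0.73 d(v)})$; dividing through, one must check
\[
1.32^{-6.11} + 1.32^{-0.73} \leq 1.
\]
A direct numerical computation gives approximately $0.183 + 0.817 \approx 1.000$, so the constants $1.32$ and $0.73$ were evidently tuned precisely to make this recursion balance; this tight inequality is the essential reason the weaker minimum-degree hypothesis $\delta \geq 7$ forces a worse base than the $1.3158$ of Theorem~\ref{main}.
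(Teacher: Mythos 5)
Your proposal is correct and follows essentially the same route as the paper: induction on $n-d(v)$, the reduction of $G[N(v)]$ to maximum degree $1$, the same three cases with cutoff near $0.309n$, the $(3,2)$-CSP transfer via Theorem~\ref{BE2}, the $2^{d(v)}$ enumeration with polynomial list-coloring, and the branching on $G/xv$ versus $G+xv$ closing with $1.32^{-6.11}+1.32^{-0.73}\leq 1$. Your observation that this last inequality is tight and dictates the base $1.32$ is accurate and matches the paper's constants exactly.
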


\begin{proof}
We apply induction on $n-d(v)$. When $n-d(v)=1$, the graph $G$ has a vertex $v$ of degree $n-1$. In this case $G$ is 3-colorable if and only if $G-v$ is 2-colorable (can be determined in polynomial time), the assertion holds in this case.


Assume that for any $n$-vertex graph $H$, with a vertex $v$ of degree $d(v)$, where $n-d(v)\leq k$ and $k\geq 1$, we can determine if $H$ is 3-colarable in  $O(1.32^{n-0.73d(v)})$ time, given all vertices in $V(H)-(N[v]\cup N^2(v))$ have degree at least $7$ in $H$.

We prove that the statement holds when an $n$-vertex graph $G$ has a vertex $v$ with  $n-d(v)=k+1$, where all vertices in $V(G)-(N[v]\cup N^2[v])$ have degree at least $7$ in $G$.

Similar to the argument in the proof of Theorem \ref{4} there are no three vertices $u_1,u_2,u_3$ in $N(v)$ with $u_1u_2,u_2u_3\in E(G)$ (see Figure 1).

We consider the following three cases.

\noindent
\textit{Case 1.} When  $d(v)> 0.309n$.

\noindent
\textit{Case 2.} When $V(G)=N[v]\cup N^2(v)$ and $d(v)\leq 0.309n$.

\noindent
  \textit{Case 3.} When $V(G)\neq N[v]\cup N^2(v)$ and $d(v)\leq 0.309n$.

  The proof of Cases 1 and 2 is almost identical to that in the proof of Theorem \ref{4} with the small difference that the base of the complexity    ($1.3158$) must be replaced by $1.32$ and $1.3157$ and $1.31578$ in Cases 1 and 2 must be replaced by $1.3199$. Hence we move forward to the proof of Case 3, which is also similar to that in the proof of Theorem \ref{4}.

  Let $x$ be a vertex in $V(G)-(N[v]\cup N^2(v))$. Note that $G$ is 3-colorable if and only if  $G/xv$ or $G\cup xv$ is 3-colorable.  Therefore it is enough to determine if any of the graphs $G/xv$ and $G\cup xv$ is 3-colorable.  Recall that by our hypothesis $d(x)\geq 7$.

Let $H=G/xv$ and $H'=G\cup xv$.  The graph $H$ has $n-1$ vertices and $d_H(v)\geq d_G(v)+7$. Similarly, we have $n(H')=n(G)$ and $d_{H'}(v)=d_G(v)+1$.  Hence, by the hypothesis, we can determine in $O(1.32^{n-1-0.73(d(v)+7)})$ time if the graph $H$ is 3-colorable, and we can determine in $O(1.32^{n-0.73(d(v)+1)})$ time if the graph $H'$ is 3-colorable. All together, to determine if $G$ is 3-colorable, the algorithm has a complexity of at most $O(1.32^{n-0.73d(v)-6.11})+O(1.32^{n-0.73d(v)-0.73})$. 

Since $1.32^{n-0.73d(v)-6.11}+1.32^{n-0.73d(v)-0.73}< 1.32^{n-0.73d(v)}$,  the assertion holds.
\end{proof}

\noindent{\bf Acknowledgment:}  The authors would like to thank the anonymous referees, whose suggestions greatly improved the exposition of this paper.


\begin{thebibliography}{9}
\frenchspacing


\bibitem{BE} R. Beigel and D. Eppstein, 3-coloring in time $O(1.3289^n)$, \textit{J. Algorithms}, 54:2, 168--204, 2005.

\bibitem{BHK} A. Bj\"orklund, T. Husfeldt and M. Koivisto, Set partitioning via inclusion–exclusion, \textit{SIAM J. Comput. 39} (2009), 546--563.


\bibitem{CH} N. Christofides, An Algorithm for the Chromatic Number of a Graph,  \textit{Computer J.}, 14, 38--39, 1971.



\bibitem{EP} D. Eppstein, Small Maximal Independent Sets and Faster Exact Graph Coloring, \textit{Journal of Graph Algorithms and Applications}, Vol. 7, no. 2,  131--140, 2003. 

\bibitem{FGS} F.V.  Fomin, S. Gaspers, S. Saurabh, Improved Exact Algorithms for Counting 3- and 4-Colorings, \textit{Proc. 13th Annual International Conference, COCOON 2007, Lecture Notes in Computer Science, 4598, Springer}, 65--74, 2007.


\bibitem{KT} J. Kratochv\'ila and Z. Tuza, Algorithmic complexity of list colorings,\textit{ Discrete Applied Mathematics}, 
Volume 50, Issue 3,  297--302, 1994.

\bibitem{LA} E. L. Lawler,  A note on the complexity of the chromatic number problem, \textit{Information Processing Letters}, 5 (3): 66–-67, 1976.


\bibitem{LY}  C. Lund and M. Yannakakis, On the hardness of approximating minimization problems, \textit{Journal of the ACM (JACM}), 
Volume 41 Issue 5,  960--981, 1994. 





\bibitem{W} 
D. B. West,\textit{ Introduction to Graph Theory}, Second edition,  Published by Prentice Hall 1996, 2001. ISBN 0-13-014400-2.


\end{thebibliography}
\end{document}